\theoremstyle{plain}
\newtheorem{theorem}{Theorem}[section]
\newtheorem{lemma}[theorem]{Lemma}
\newtheorem*{theorem*}{Theorem}
\newtheorem*{conjecture*}{Conjecture}
\newcommand{\N}{\mathbb{N}}
\newcommand{\Z}{\mathbb{Z}}                                               
\newcommand{\Q}{\mathbb{Q}}                                               
\newcommand{\R}{\mathbb{R}}                                               
\newcommand{\C}{\mathbb{C}} 
\newcommand{\A}{\mathbb{A}} 
\newcommand{\B}{\mathbb{B}} 
\newcommand{\m}{\mathfrak{m}} 
\newcommand{\n}{\mathfrak{n}} 
\newcommand{\aid}{\mathfrak{a}} 
\newcommand{\pid}{\mathfrak{p}}
\newcommand{\g}{\mathfrak{g}}
\newcommand{\sid}{\mathfrak{s}}
\newcommand{\tid}{\mathfrak{t}}
\newcommand{\App}{\mathcal{A}(\PSI)}
\newcommand{\OK}{\mathcal{O}}
\newcommand{\defiso}[5]{$$\begin{array}{rrcl} #1: & #2 & \longrightarrow & #3 \\ & #4 & \longmapsto & #5 \end{array}$$}
\newcommand{\x}{\mathbf{x}}
\newcommand{\PSI}{\underline{\psi}}
\newcommand{\z}{\mathbf{z}}
\DeclareMathOperator{\dnm}{dnm}
\DeclareMathOperator{\vol}{vol}
\newcommand{\aand}{\hbox{\quad and \quad}}
\DeclareMathOperator{\Nm}{N}
\DeclareMathOperator{\Ball}{Bx}
\newcommand{\rset}[2]{\left\{ #1 \ \left| \ #2 \right. \right\}}
\begin{document}

\title{The Duffin--Schaeffer theorem in number fields}
\author{Matthew Palmer}
\thanks{Research supported by an EPSRC Doctoral Training Grant (EP/K502996/1) and by the Heilbronn Institute for Mathematical Research. \\ \indent The results in this paper formed part of the author's doctoral thesis at the University of Bristol (\cite{Palmer2016}). \\ \indent 2010 \emph{Mathematics Subject Classification}. 11J17, 11J83, 11K60}
\address{Department of Mathematics, Box 480, Uppsala University, SE-75106 Uppsala, Sweden}
\email{matthew.palmer@math.uu.se}

\begin{abstract}
The Duffin--Schaeffer theorem is a well-known result from metric number theory, which generalises Khinchin's theorem from monotonic functions to a wider class of approximating functions.

In recent years, there has been some interest in proving versions of classical theorems from Diophantine approximation in various generalised settings. In the case of number fields, there has been a version of Khinchin's theorem proven which holds for all number fields (\cite{Cantor1965}), and a version of the Duffin--Schaeffer theorem proven only in imaginary quadratic fields (\cite{NakadaWagner1991}).

In this paper, we prove a version of the Duffin--Schaeffer theorem for all number fields.
\end{abstract}

\maketitle

In a 1941 paper (\cite{DuffinSchaeffer1941}), R. J. Duffin and A. C. Schaeffer stated a conjecture, now famous in Diophantine approximation and metric number theory as the \emph{Duffin--Schaeffer conjecture}:

\begin{conjecture*}[Duffin \& Schaeffer, 1941] Suppose that a function \( \psi : \N \to \R_{\geq 0} \) satisfies the condition
\begin{equation}\label{eq:divC} \sum_{n \in \N} \frac{\psi(n) \varphi(n)}{n} = \infty, \end{equation}
where \( \varphi \) is the Euler totient function. Then the set \( A(\psi) \) defined by
\[ A(\psi) = \rset{x \in [0,1]}{\left| x - \frac{a}{n} \right| \leq \frac{\psi(n)}{n} \ \hbox{for infinitely many reduced } \frac{a}{n} \in \Q} \]
is of Lebesgue measure \(1\). \end{conjecture*}

In the same paper, Duffin and Schaeffer proved the following partial result towards this conjecture:

\begin{theorem*}[Theorem I, \cite{DuffinSchaeffer1941}] Suppose that \eqref{eq:divC} holds, and that \( \psi \) also satisfies
\begin{equation} \label{eq:classicallimsup} \limsup_{N \to \infty} \frac{\sum_{n=1}^N \frac{\psi(n) \varphi(n)}{n}}{\sum_{n=1}^N \psi(n)} > 0.\end{equation}

Then the set \(A(\psi)\) has Lebesgue measure \(1\).\end{theorem*}

This result generalises a 1924 result of Khinchin (\cite{Khintchine1924}), which did not require the fractions \(\frac{a}{n}\) to be monotonic, and has in place of \eqref{eq:classicallimsup} the condition that \(n\psi(n)\) is monotonically decreasing.

Since the Duffin--Schaeffer conjecture was stated, a lot of effort has gone into trying to prove it, or at least prove better partial results towards it. (See, for example, \cite{Erdos1970}, \cite{Vaaler1978}, \cite{Harman1990}, \cite{ED1}, \cite{ED2}, \cite{Aistleitner2014}, \cite{AistleitnerEtAl}.) In fact, during revisions to this paper, a proof of the full conjecture was announced by Dimitris Koukoulopoulos and James Maynard (see \cite{KMDSProof}).

However, work has also gone into trying to identify and prove analogues and natural generalisations of the main theorems (including those of Khinchin and of Duffin and Schaeffer) in different setups. One such very natural generalisation is to replace the rationals by a generic number field $K$, and to approximate elements of its various completions by elements of $K$. In 1965, Cantor (\cite{Cantor1965}) proved a version of Khinchin's theorem in this setup for general number fields, and in 1991 Nakada and Wagner (\cite{NakadaWagner1991}) proved a version of the Duffin--Schaeffer theorem for imaginary quadratic fields.

In this paper, we prove a version of the Duffin--Schaeffer theorem for general number fields.

In \S\ref{sec:setup}, we will lay out the setup we will be working in, give the results of Cantor and of Nakada and Wagner, and state our main result (namely Theorem \ref{thm:duffinschaefferK}).

In \S\ref{sec:zeroone}, we will prove a version of Gallagher's classical zero-one law (see Theorem 1 in \cite{Gallagher1961}) in our number field setup (Theorem \ref{thm:zerooneK}), and in \S\ref{sec:overlap}, we will prove some useful overlap estimates (Lemma \ref{lem:overlapK}). Finally, in \S\ref{sec:mainproof}, we will use Theorem \ref{thm:zerooneK} and Lemma \ref{lem:overlapK} to prove Theorem \ref{thm:duffinschaefferK}.

\emph{Notation.} In this paper, the set \(\N\) of natural numbers does not include \(0\).

\tableofcontents
\section{Setup and main result}
\label{sec:setup}
In this section, we describe some of the existing results in the field, before going on to describe the setup we will be working in and state our main theorem.

\subsection{Existing results}
The first work towards a version of the Duffin--Schaeffer theorem in number fields was done by Cantor, who proved a very general version of Khinchin's theorem in number fields (see Theorem 5.12 in \cite{Cantor1965}). Later, in 1991, Nakada and Wagner proved the following version of the Duffin--Schaeffer theorem for imaginary quadratic fields:

\begin{theorem}[Theorem 2, \cite{NakadaWagner1991}] Let $K = \Q(\sqrt{D})$, where $D$ is a square-free negative integer, and let $\psi$ be a non-negative function defined on the ring of integers $\OK_K$ of \(K\) which satisfies $\psi(\gamma) = \psi(u \cdot \gamma)$ for all units $u \in \OK_K^{\times}$. Denote by $\Phi(\gamma)$ the Euler function of $K$, i.e. the number of reduced residue classes mod $\gamma$.

Now suppose that
$$\sum_{\gamma \in \OK_K} \psi(\gamma)^2 = \infty$$
and that for some constant $C > 0$, there exist infinitely many $R \in \N$ such that
$$\sum_{\substack{|\gamma| < R \\ \gamma \in \OK_K}} \psi(\gamma)^2 < C \sum_{\substack{|\gamma| < R \\ \gamma \in \OK_K}} \frac{\psi(\gamma)^2 \Phi(\gamma)}{|\gamma|^2}.$$

Then the inequality
$$\left|z - \frac{\alpha}{\gamma}\right| < \frac{\psi(\gamma)}{|\gamma|}, \quad (\alpha,\gamma) = 1, \quad \alpha,\gamma \in \OK_K$$
has infinitely many solutions for almost all $z \in \C$. \end{theorem}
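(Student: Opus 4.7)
\noindent\emph{Proof plan.} The plan is to adapt the classical Duffin--Schaeffer argument to the imaginary quadratic field $K$ and its archimedean completion $\C$. Write the set of $z \in \C$ having infinitely many coprime solutions as a limsup set
$$W(\psi) \;=\; \limsup_{\gamma \in \OK_K} A(\gamma), \qquad A(\gamma) \;=\; \bigcup_{\substack{\alpha \bmod \gamma \\ (\alpha,\gamma)=1}} B\!\left(\frac{\alpha}{\gamma},\, \frac{\psi(\gamma)}{|\gamma|}\right),$$
and restrict to a fundamental parallelogram $F \subset \C$ for the translation action of $\OK_K$. Since each $A(\gamma)$ is $\OK_K$-periodic, it suffices to show that $F \cap W(\psi)$ has positive Lebesgue measure: a Gallagher-type zero-one law for $\OK_K$-periodic limsup sets, proved by a density-point argument in the two-dimensional lattice setting, will then upgrade positive measure to full measure.

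For the positive-measure step, one first computes, up to boundary contributions from discs straddling $\partial F$,
$$\vol\bigl(A(\gamma) \cap F\bigr) \;\asymp\; \frac{\pi\, \Phi(\gamma)\, \psi(\gamma)^2}{|\gamma|^2}$$
(valid when $\psi(\gamma)$ is not too large, which we may assume after truncating). Combining this estimate with the two hypotheses yields arbitrarily large $R$ for which
$$\sum_{|\gamma| < R} \vol\bigl(A(\gamma) \cap F\bigr) \;\gg\; \frac{1}{C} \sum_{|\gamma| < R} \psi(\gamma)^2 \;\longrightarrow\; \infty,$$
which supplies the divergence needed to apply the Chung--Erd\H{o}s inequality
$$\vol\bigl(F \cap W(\psi)\bigr) \;\geq\; \limsup_{R \to \infty}\; \frac{\bigl(\sum_{|\gamma|<R} \vol(A(\gamma) \cap F)\bigr)^2}{\sum_{|\gamma|,|\delta|<R} \vol\bigl(A(\gamma) \cap A(\delta) \cap F\bigr)}.$$

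The crux is therefore an overlap estimate showing that, for a sufficiently large proportion of pairs $(\gamma,\delta)$,
$$\vol\bigl(A(\gamma) \cap A(\delta) \cap F\bigr) \;\ll\; \vol\bigl(A(\gamma) \cap F\bigr)\, \vol\bigl(A(\delta) \cap F\bigr).$$
Two discs $B(\alpha/\gamma, \psi(\gamma)/|\gamma|)$ and $B(\beta/\delta, \psi(\delta)/|\delta|)$ can intersect only when $|\alpha\delta - \beta\gamma| \leq |\delta|\psi(\gamma) + |\gamma|\psi(\delta)$, and $\alpha\delta - \beta\gamma$ lies in the ideal $(\gamma,\delta) \subset \OK_K$; counting the lattice points of $(\gamma,\delta)$ inside a Euclidean disc via a Minkowski-type estimate produces the expected factor proportional to $|(\gamma,\delta)|^2 / |\gamma\delta|^2$ and reproduces the classical $\gcd$-structure from the rational case. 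I expect the main obstacle to be executing this overlap bound uniformly while correctly tracking the coprimality conditions on $\alpha$ and $\beta$ (so that the Euler-function factors $\Phi(\gamma)\Phi(\delta)$ genuinely arise, as in the rational proof), and while handling imaginary quadratic fields with non-trivial class group, where the ``gcd'' must be interpreted ideal-theoretically throughout.
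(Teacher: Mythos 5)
This statement is quoted from Nakada--Wagner and is not proved in the paper; the paper instead proves the more general Theorem \ref{thm:duffinschaefferK} for arbitrary number fields, by exactly the architecture you describe: a limsup set of unions of balls centred at reduced fractions, a Gallagher-type zero-one law proved via a density-point argument (Theorem \ref{thm:zerooneK}), a volume computation $\lambda(\mathcal{A}'_{\n}) = 2^s\pi^t\Phi(\n)\Psi(\n)$, an overlap estimate obtained by counting lattice points $\theta=\beta-\gamma$ in the fractional ideal $\g/(\m\n)$ inside a box (Lemma \ref{lem:overlapK}), and the Chung--Erd\H{o}s inequality combined with the limsup hypothesis. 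Your proposal is the same strategy specialised to the imaginary quadratic case, and your remark that the gcd must be handled ideal-theoretically is precisely the refinement the paper makes.

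One correction to the key step: the overlap bound you announce, $\vol(A(\gamma)\cap A(\delta)\cap F)\ll \vol(A(\gamma)\cap F)\,\vol(A(\delta)\cap F)$, is genuine quasi-independence, which carries the factors $\Phi(\gamma)\Phi(\delta)/|\gamma\delta|^2$; that is \emph{stronger} than what the disc-counting argument you sketch actually delivers, and proving it would require a Pollington--Vaughan-style sieve analysis of the coprimality conditions. What the counting gives (number of intersecting pairs $\ll (|\delta|\psi(\gamma)+|\gamma|\psi(\delta))^2/\Nm(\g)$ times $\Nm(\g)$ pairs per value of $\alpha\delta-\beta\gamma$, times intersection area $\ll\min\{\psi(\gamma)/|\gamma|,\psi(\delta)/|\delta|\}^2$) is the bound $\ll\psi(\gamma)^2\psi(\delta)^2$, with no Euler factors --- the analogue of Lemma \ref{lem:overlapK}. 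This weaker bound suffices: in the Chung--Erd\H{o}s quotient the denominator becomes $(\sum_{|\gamma|<R}\psi(\gamma)^2)^2$ while the numerator is $(\sum_{|\gamma|<R}\Phi(\gamma)\psi(\gamma)^2/|\gamma|^2)^2$, and it is exactly the hypothesised limsup condition that bounds this ratio away from zero. You should also make explicit the truncation to $\psi(\gamma)\leq\tfrac{1}{2}$ (the analogue of condition \eqref{eq:boundedness}), since both the volume identity with the exact count $\Phi(\gamma)$ of discs and the lattice-point count without a spurious $+O(1)$ term rely on it.
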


However, we have some issues with this result. Namely, it does not allow for all elements of $K$ to be used as approximants. While any element of $\Q$ can be written as $\frac{a}{n}$ for some $a,n \in \Z$ with $(a,n) = 1$, this is a fact that comes from uniqueness of factorisation, and hence the same cannot be said for a general element of an imaginary quadratic field, where we can have class number greater than $1$. One famous example of non-unique factorisation is in $K = \Q(\sqrt{-5})$, where we have
$$2 \cdot 3 = 6 = (1 + \sqrt{-5})(1 - \sqrt{-5}),$$
and hence the element $\frac{1 + \sqrt{-5}}{2}$ has no unique reduced form as a quotient of elements. This suggests that the right way to state these sorts of results is by considering not elements, but \emph{ideals}.
\subsection{Diophantine approximation in general number fields}
Let $K$ be a number field of degree $n$. Let $\OK_K$ denote its ring of integers, and let $I_K$ denote the semigroup of ideals of $\OK_K$. We define the Euler $\Phi$-function on \(I_K\) by
\defiso{\Phi}{I_K}{\N}{\n}{\#(\OK_K / \n \OK_K)^{\times},}
where \(R^{\times}\) denotes the group of units in a ring \(R\), and \(\# A \) denotes the cardinality of a finite set \(A\).

Suppose that $K$ has $s$ real embeddings and $t$ pairs of complex embeddings, and denote them by $\sigma_1,\ldots,\sigma_s$ and $\tau_1,\ldots,\tau_t$ respectively. We denote the set of all embeddings of $K$ by $\Sigma$, and denote a generic embedding by $\rho$.

We also define $|\cdot|_{\R}$ to be the standard real absolute value, and $|\cdot|_{\C}$ to be the \emph{square} of the standard complex absolute value. Then we define $|\cdot|_{\rho}$ to be either $|\cdot|_{\R}$ if $\rho$ is real or $|\cdot|_{\C}$ if $\rho$ is complex. (If we take the absolute value of something explicitly involving $\rho$, for example $|\rho(\gamma)|$ or $|x - \rho(\gamma)|$, we assume that the absolute value is with respect to $\rho$, and hence omit the subscript.)

For any element $\gamma \in K$, we define the norm $\Nm(\gamma)$ of $\gamma$ by
$$\Nm(\gamma) = \prod_{\hbox{\scriptsize $\rho$ real}} \rho(\gamma) \prod_{\hbox{\scriptsize $\rho$ complex}} \rho(\gamma) \overline{\rho(\gamma)}.$$

We identify each element of $K$ with an element of $\R^s \times \C^t$ by embedding it into each of its completions. That is to say, we define a map $\iota : K \to \R^s \times \C^t$ by
$$\iota(\alpha) = (\sigma_1(\alpha),\ldots,\sigma_s(\alpha),\tau_1(\alpha),\ldots,\tau_t(\alpha)).$$

The image $\iota(\OK_K)$ of $\OK_K$ under this map forms a lattice in $\R^s \times \C^t$. We fix a fundamental domain of this lattice, and denote it by $D_K$. We have a measure \(\lambda\) on \(D_K\) induced by the Lebesgue measure on \( \R^s \times \C^t \).

As a result of this diagonal embedding of \(K\) into \( \R^s \times \C^t \), we can index the components of an element \( \z \in \R^s \times \C^t \) by the embeddings of \(K\). That is, we can write
\[ \z = (z_1,\ldots,z_s,z_{s+1},\ldots,z_{s+t}) = (z_{\sigma_1},\ldots,z_{\sigma_s},z_{\tau_1},\ldots,z_{\tau_t}).\]

Then for any embedding \( \rho \), we can refer to the \(\rho\)-coordinate \(z_{\rho}\) of an element \( \z \in \R^s \times \C^t \).

By Dirichlet's unit theorem, the group of units of \(\OK_K\) has rank \( s+t-1\). That is to say, there exists a set of multiplicatively independent elements \(\{u_1,\ldots,u_{s+t-1}\} \subset \OK_K^{\times} \) such that any element \(u \in \OK_K^{\times}\) can be written as 
\[ u = \zeta u_1^{n_1} \cdot \cdots \cdot u_{s+t-1}^{n_{s+t-1}}, \]
where \( \zeta \) is some root of unity in \(\OK_K^{\times}\). We call such a set \(\{u_i\}\) a \emph{system of fundamental units} of \(K\).


For each embedding $\rho \in \Sigma$, we choose a function $\psi_{\rho} : I_K \to \R_{\geq 0}$. We combine these into one function $\PSI$ by defining
\defiso{\PSI}{I_K}{\R_{\geq 0}^{s+t}}{\n}{\bigoplus_{\rho \in \Sigma} \psi_{\rho}(\n).}

We also define a function $\Psi : I_K \to \R_{\geq 0}$ by
$$\Psi(\n) = \left( \prod_{\substack{\sigma \in \Sigma \\ \hbox{\scriptsize $\sigma$ real}}} \psi_{\sigma}(\n) \right) \cdot \left( \prod_{\substack{\tau \in \Sigma \\ \hbox{\scriptsize $\tau$ complex}}} \psi_{\tau}(\n)^2 \right).$$

For any element $\gamma \in K$, we have a unique way of writing $(\gamma) = \frac{\aid}{\n}$ with $\aid,\n \in I_K$ and $(\aid,\n) = 1$. Then we write $\dnm \gamma = \n$. 

For $\x \in \R^s \times \C^t$, we say $\gamma \in K$ is a \emph{$\PSI$-good approximation} to $\x$ if we have
$$\left| x_{\rho} - \rho(\gamma) \right| \leq \psi_{\rho}(\dnm(\gamma))$$
for each $\rho \in \Sigma$. We then define a set $\App$ by 
$$\App = \left\{ \x \in D_K \left| \begin{array}{c} \hbox{there exist infinitely many $\gamma \in K$ such} \\ \hbox{that $\gamma$ is a $\PSI$-good approximation to $\x$} \end{array} \right. \right\}.$$

Then our version of the Duffin--Schaeffer theorem for number fields is as follows:

\begin{theorem}\label{thm:duffinschaefferK} If we have
\begin{equation}\label{eq:divergence}\sum_{\n \in I_K} \Phi(\n) \Psi(\n) = \infty\end{equation}
and
\begin{equation}\label{eq:limsup}\limsup_{R \to \infty} \frac{\sum_{\substack{\n \in I_K \\ \Nm(\n) \leq R}} \Phi(\n) \Psi(\n)}{\sum_{\substack{\n \in I_K \\ \Nm(\n) \leq R}} \Nm(\n) \Psi(\n)} > 0,\end{equation}
and $\PSI$ satisfies the boundedness condition
\begin{equation}\label{eq:boundedness}\psi_{\rho}(\n) \leq \frac{1}{2 \Nm(\n)^{\frac{1}{s+t}}}\end{equation}
for all \( \rho \in \Sigma\), then $\App$ has measure $\lambda(D_K)$. \end{theorem}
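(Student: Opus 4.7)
The plan is to follow the classical Duffin--Schaeffer strategy, transplanted to the number field setting. For each $\n \in I_K$, define the resonance set
$$A(\n) = \bigcup_{\substack{\gamma \in K \\ \dnm(\gamma) = \n}} \{\x \in D_K : |x_\rho - \rho(\gamma)| \leq \psi_\rho(\n) \text{ for all } \rho \in \Sigma\},$$
reading the ball condition modulo the lattice $\iota(\OK_K)$ so that translates of $\gamma$ whose images lie outside $D_K$ still contribute. Then $\App = \bigcap_N \bigcup_{\Nm(\n) \geq N} A(\n)$ is a limsup set. The plan is to show $\mu(\App) > 0$ via a divergence Borel--Cantelli argument and then upgrade this to $\mu(\App) = \lambda(D_K)$ using the zero-one law (Theorem \ref{thm:zerooneK}).

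First I would estimate $\mu(A(\n))$. The volume of a single ball $\{|x_\rho - \rho(\gamma)| \leq \psi_\rho(\n)\}$ equals a fixed constant times $\Psi(\n)$, using that $|\cdot|_\C$ is the square of the standard complex absolute value (so each complex factor contributes Euclidean area $\pi\psi_\tau(\n)$). Modulo the lattice, there are precisely $\Phi(\n)$ elements of $K$ with denominator ideal equal to $\n$, and the boundedness hypothesis \eqref{eq:boundedness} is calibrated exactly so that the balls around distinct such centres remain pairwise disjoint in the torus $(\R^s \times \C^t)/\iota(\OK_K)$. Hence $\mu(A(\n)) = c \cdot \Phi(\n) \Psi(\n)$ for some fixed $c > 0$, and condition \eqref{eq:divergence} gives $\sum_{\n \in I_K} \mu(A(\n)) = \infty$.

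Next I would apply the Chung--Erd\H{o}s inequality to the $A(\n)$ enumerated by increasing norm, which yields
$$\mu(\App) \geq \limsup_{R \to \infty} \frac{\left(\sum_{\Nm(\n) \leq R} \mu(A(\n))\right)^2}{\sum_{\Nm(\m), \Nm(\n) \leq R} \mu(A(\m) \cap A(\n))}.$$
The overlap estimates of Lemma \ref{lem:overlapK} should give, schematically, $\mu(A(\m) \cap A(\n)) \ll \mu(A(\m)) \mu(A(\n)) / \lambda(D_K) + \Nm(\m \n)\Psi(\m)\Psi(\n)$ up to a mild arithmetic factor, so that the denominator splits into a main piece bounded by a constant times $(\sum \mu(A(\n)))^2$ and an error piece bounded by a constant times $(\sum_{\Nm(\n) \leq R} \Nm(\n) \Psi(\n))^2$. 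Condition \eqref{eq:limsup} is exactly the statement that, along some sequence $R_k \to \infty$, the ratio $\sum_{\Nm(\n) \leq R_k} \Phi(\n)\Psi(\n) \big/ \sum_{\Nm(\n) \leq R_k} \Nm(\n)\Psi(\n)$ is bounded below, which is what is needed for the error term to be dominated by the main term. The Chung--Erd\H{o}s ratio is therefore bounded below by a positive constant along this sequence, giving $\mu(\App) > 0$, and Theorem \ref{thm:zerooneK} then forces $\mu(\App) = \lambda(D_K)$.

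The main obstacle will be lining up the overlap estimates with the error bookkeeping in the Chung--Erd\H{o}s ratio so that condition \eqref{eq:limsup} genuinely absorbs the error terms produced by the multiple embeddings and by the class-group subtleties. Counting reduced approximants correctly when $h_K > 1$ (where ``reduced form'' is an ideal-theoretic condition rather than a relation between elements) and verifying the disjointness enforced by \eqref{eq:boundedness} are technical but routine; the delicate part is ensuring that the same sequence $R_k$ simultaneously drives the numerator to infinity and keeps the overlap error under control.
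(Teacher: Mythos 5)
Your proposal is correct and follows essentially the same route as the paper: compute $\lambda(\mathcal{A}'_{\n}(\PSI)) = 2^s\pi^t\Phi(\n)\Psi(\n)$ using the boundedness condition to keep the balls disjoint, apply the Chung--Erd\H{o}s-type lower bound with the overlap estimates of Lemma \ref{lem:overlapK} (the paper's bound $C_K\Nm(\m)\Nm(\n)\Psi(\m)\Psi(\n)$ absorbs both terms of your schematic split, since $\Phi(\n)\leq\Nm(\n)$), invoke condition \eqref{eq:limsup} to get positive measure, and finish with the zero-one law.
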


\textbf{Note.} The boundedness condition \eqref{eq:boundedness} is the equivalent of the (implicit) assumption in Duffin and Schaeffer's original paper that \( \psi(n) \leq \frac{1}{2} \). This assumption was removed in a paper by Pollington and Vaughan (see \cite{PollingtonVaughan}); however, the methods there do not seem to generalise easily to the case of number fields, and hence we state our result with the boundedness condition.

In the next section, we will state and prove a zero-one law for sets of the form $\App$, which will be instrumental in proving Theorem \ref{thm:duffinschaefferK}.

\section{A zero-one law}
\label{sec:zeroone}

The statement we intend to prove is the following:

\begin{theorem}\label{thm:zerooneK} Suppose that $\Psi(\n) \to 0$ as $\Nm(\n) \to \infty$. Then the set $\App$ has measure $0$ or $\lambda(D_K)$. \end{theorem}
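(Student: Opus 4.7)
The plan is to prove Theorem \ref{thm:zerooneK} in the spirit of Gallagher's classical zero-one law: I would show that $\App$ is, modulo null sets, invariant under translation by $\iota(\beta)$ for every $\beta \in K$, viewed as an action on the torus $(\R^s \times \C^t)/\iota(\OK_K)$, and then combine this with the density of $\iota(K)/\iota(\OK_K)$ in this torus and the Lebesgue density theorem to force the zero-one alternative.

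First, I would write $\App = \limsup_\n E_\n$, where
$$E_\n = \bigcup_{\substack{\gamma \in K \\ \dnm \gamma = \n}} \prod_{\rho \in \Sigma} \left\{ \x \in D_K : |x_\rho - \rho(\gamma)| \leq \psi_\rho(\n) \right\},$$
so that each $E_\n$ is a union of $\Phi(\n)$ boxes of individual Lebesgue volume proportional to $\Psi(\n)$. The hypothesis $\Psi(\n) \to 0$ guarantees that these boxes shrink to points as $\Nm(\n) \to \infty$ and become pairwise disjoint modulo the lattice for all sufficiently large $\Nm(\n)$.

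Granting the translation-invariance claim $\lambda(\App \triangle (\App + \iota(\beta))) = 0$ for every $\beta \in K$, the conclusion follows quickly. If $\lambda(\App) > 0$, fix a Lebesgue density point $\x_0$ of $\App$; for any $\x \in D_K$ and any $\eta > 0$, density of $\iota(K)$ in $\R^s \times \C^t$ supplies $\beta \in K$ with $\iota(\beta) \equiv \x - \x_0 \pmod{\iota(\OK_K)}$ to within $\eta$. By translation invariance, $\x_0 + \iota(\beta)$ is again a Lebesgue density point of $\App$, and a short computation comparing balls around $\x$ and $\x_0 + \iota(\beta)$ (with $\eta$ tending to zero faster than the ball radius) shows $\x$ itself is a density point, forcing $\lambda(\App) = \lambda(D_K)$.

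The main obstacle is the translation-invariance claim. Fix $\beta \in K$ and write $(\beta) = \aid/\mathfrak{b}$. For $\x \in \App$ with $\PSI$-good approximants $\gamma_k$ of denominators $\n_k = \dnm \gamma_k$, the translates $\gamma_k + \beta$ automatically satisfy $|x_\rho + \rho(\beta) - \rho(\gamma_k + \beta)| = |x_\rho - \rho(\gamma_k)| \leq \psi_\rho(\n_k)$; the delicate point is that prime-by-prime valuation analysis only yields $\dnm(\gamma_k + \beta) \mid \operatorname{lcm}(\n_k, \mathfrak{b})$, so the new denominator may exceed $\n_k$ by up to a factor of $\Nm(\mathfrak{b})$, and the inequality $\psi_\rho(\dnm(\gamma_k + \beta)) \geq \psi_\rho(\n_k)$ needed to conclude can fail. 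To control the exceptional set of $\x$ for which translation by $\iota(\beta)$ destroys the infinite supply of approximants, I would use $\Psi(\n) \to 0$ to cover this set by a lim-sup of small boxes whose total measure is summable, and then invoke Borel--Cantelli to conclude it is null. This step, which mirrors the change-of-denominator book-keeping in Gallagher's original 1961 proof, is where I expect the greatest technical difficulty.
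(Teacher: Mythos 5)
Your reduction of the zero-one law to the claim that $\App$ is invariant modulo null sets under translation by $\iota(\beta)$ for every $\beta \in K$ is the Cassels-style route, and the concluding density-point argument \emph{granting} that claim is fine. But the translation-invariance claim is precisely the step that fails for sets defined via reduced fractions, and your proposed repair does not close the gap. As you observe, if $\gamma$ is a $\PSI$-good approximation to $\x$ with $\dnm\gamma = \n$, then $\gamma + \beta$ approximates $\x + \iota(\beta)$ to within the same distances, but the admissible radii for it are $\psi_{\rho}(\dnm(\gamma+\beta))$, which bear no relation to $\psi_{\rho}(\n)$: writing $(\beta) = \aid/\mathfrak{b}$, the function $\PSI$ may for instance vanish on every ideal not coprime to $\mathfrak{b}$, in which case the translated approximants are inadmissible for \emph{every} $\x$ and every $\gamma$, not merely for a small exceptional set of $\x$. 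The set you propose to kill by Borel--Cantelli is $\{\x \in \App : \x + \iota(\beta) \notin \App\}$, and nothing in the hypothesis $\Psi(\n) \to 0$ yields a summable cover of it; since the theorem itself renders translation invariance trivial once $\App$ has measure $0$ or $\lambda(D_K)$, proving the invariance directly is essentially as hard as the theorem. Note also that, contrary to your closing remark, Gallagher's 1961 proof does not proceed by translation: precisely because of this obstruction, he replaces translations by the expanding maps $x \mapsto px$ and $x \mapsto px+1 \bmod 1$ for primes $p$.

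That multiplicative route is the one the paper takes. For each principal prime $\pid = (\pi)$ of large norm it splits $\App = \A(\pid) \cup \B(\pid) \cup \C(\pid)$ according to whether $\pid \nmid \dnm\gamma$, $\pid \mid\mid \dnm\gamma$, or $\pid^2 \mid \dnm\gamma$ for the approximants; the maps $T_{\pi,0}$ and $T_{\pi,1}$ send (unions over $\nu$ of suitably inflated versions of) the first two pieces into themselves, an ergodicity lemma (Lemma \ref{lem:metricallytransitive}) then forces each to have measure $0$ or $\lambda(D_K)$, and the third piece is handled by genuine periodicity under $\tfrac{1}{\pi}\OK_K$ together with a density-point argument. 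A further number-field ingredient absent from your sketch is the unit group: a generator $\pi$ need not satisfy $|\rho(\pi)| > 1$ in every embedding, so one must compose with the maps $T_{u,0}$ for fundamental units $u$ (Lemmas \ref{lem:normbound} and \ref{lem:metricallytransitive}) to obtain expansion in all coordinates simultaneously. To salvage your outline you would need to replace translation invariance by invariance under these multiplicative maps.
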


Before we can prove this result, we will need a few lemmas.

\begin{lemma}\label{lem:setsinboxes} Let $\{B_n\}_{n \in \N}$ be a sequence of boxes in $\R^s \times \C^t$ such that $\lambda(B_n) \to 0$ as $n \to \infty$, and let $U_n$ be a sequence of measurable sets such that, for some positive $\varepsilon < 1$, we have
$$U_n \subset B_n \aand \lambda(U_n) \geq \varepsilon \lambda(B_n)$$
for each $n \in \N$.

Then we have
\[
\lambda \left( \limsup_{n \in \N} U_n \right) = \lambda(\limsup_{n \in \N} B_n).\]\end{lemma}

\begin{lemma}\label{lem:normbound} For any number field $K$ and constant $C > 0$, there exists a bound $H_K(C)$ such that for all $\gamma \in \OK_K$ with $\Nm(\gamma) > H_K(C)$, there exists some $u \in \OK_K^{\times}$ with
$$|\rho(u \gamma)| > C \hbox{ for all $\rho \in \Sigma$.}$$ \end{lemma}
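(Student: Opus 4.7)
The plan is to use Dirichlet's unit theorem via the logarithmic embedding. Define $L : K^{\times} \to \R^{s+t}$ by $L(\alpha) = (\log |\rho(\alpha)|)_{\rho \in \Sigma}$, where on each coordinate we use the absolute value $|\cdot|_{\rho}$ of the paper (so ordinary modulus for a real embedding, squared modulus for a complex one). Then $L$ is a homomorphism from $(K^{\times}, \cdot)$ to $(\R^{s+t}, +)$, and the sum of its coordinates at $\alpha$ equals $\log |\Nm(\alpha)|$.

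By Dirichlet's unit theorem, the image $\Lambda := L(\OK_K^{\times})$ is a lattice of full rank $s+t-1$ inside the trace-zero hyperplane $H = \{x \in \R^{s+t} : \sum_{\rho} x_{\rho} = 0\}$. In particular $\Lambda$ has a finite covering radius $R = R(K)$ in $H$: every point of $H$ lies within Euclidean distance $R$ of some element of $\Lambda$. This constant $R$ depends only on $K$.

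Given $\gamma \in \OK_K$ nonzero, I would decompose $L(\gamma) = v + \frac{\log |\Nm(\gamma)|}{s+t}\mathbf{1}$, where $\mathbf{1} = (1,\ldots,1)$ and $v$ is the orthogonal projection of $L(\gamma)$ onto $H$. Applying the covering property of $\Lambda$ to the point $-v \in H$ yields a unit $u \in \OK_K^{\times}$ with $\| L(u) + v \| \le R$, and then every coordinate of $L(u\gamma) = (L(u) + v) + \frac{\log |\Nm(\gamma)|}{s+t}\mathbf{1}$ is at least $\frac{\log |\Nm(\gamma)|}{s+t} - R$. This quantity exceeds $\log C$ as soon as $|\Nm(\gamma)| > (Ce^R)^{s+t}$, so one may simply set $H_K(C) := (Ce^R)^{s+t}$.

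There is no serious obstacle here: once Dirichlet's unit theorem supplies the finite covering radius $R$, the rest is a short calculation. The only delicate point is that $L(\gamma)$ itself does not lie in $H$ when $\gamma$ is not a unit, which is exactly why one must first split off the $\mathbf{1}$-component and then take $|\Nm(\gamma)|$ large enough to push every coordinate of $L(u\gamma)$ past the target value $\log C$ uniformly in $\rho$.
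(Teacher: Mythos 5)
Your proof is correct and follows essentially the same route as the paper: both pass to the logarithmic embedding, invoke Dirichlet's unit theorem to get a full-rank lattice in the trace-zero hyperplane, and use the fact that every target point is within a bounded distance of a lattice point (you phrase this as the covering radius; the paper finds a lattice point in a growing hypercube inside the admissible region). Your packaging via the orthogonal decomposition $L(\gamma) = v + \frac{\log|\Nm(\gamma)|}{s+t}\mathbf{1}$ even yields the explicit bound $H_K(C) = (Ce^R)^{s+t}$, which the paper leaves implicit.
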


\emph{Note.} In the rational case and the imaginary quadratic case covered by Nakada and Wagner, this lemma is trivially true, since the number of units is always finite.

Lemma~\ref{lem:setsinboxes} is an analogue of Lemma 2 in \cite{Gallagher1961}, and as the proof follows in exactly the same way, we will not give it here; Lemma~\ref{lem:normbound} follows directly from Lemma 1 in Chapter V of \cite{LangBook}.
%
%
%
Now we apply these two results to prove a final lemma.

\begin{lemma}\label{lem:metricallytransitive} Let $K$ be a number field. Let $\mathcal{F}_K = \{u_1,\ldots,u_r\}$ denote a system of fundamental units of $K$ (where \(r = s+t-1\)), and define $\Omega_K$ to be the constant
$$\Omega_K = \max_{\rho \in \Sigma} \max_{u \in \mathcal{F}_K} |\rho(u)|.$$

For any elements $\alpha,\beta \in \OK_K$ with \(\alpha \neq 0\), define a map $T_{\alpha,\beta} : D_K \to D_K$ by
\[
T_{\alpha,\beta} : x_{\rho} \mapsto \rho(\alpha) x_{\rho} + \rho\left(\tfrac{\beta}{\alpha}\right) \mod \iota(\OK_K).\]

Then if a set $A \subseteq D_K$ satisfies 
\begin{equation} \label{eq:Tinvariance} T_{\pi,\kappa}(A) \subseteq A \aand T_{u,0}(A) \subseteq A \hbox{ for all } u \in \mathcal{F}_K\end{equation}
for some $\pi, \kappa \in \OK_K$ with $|\rho(\pi)| > \Omega_K$ for all $\rho \in \Sigma$, then the set $A$ has measure $0$ or $\lambda(D_K)$. \end{lemma}

\textbf{Remark.} The reader may wonder why we do not prove a similar result simply requiring that our set satisfies \( T_{\pi,\kappa}(A) \subseteq A \) for some \(\pi, \kappa \in \OK_K \) with \( |\rho(\pi)| > 1 \) for all \(\rho \in \Sigma\), without requiring multiplication by units. As far as the author knows, it is possible that such a result holds, or even such a result where the map shrinks in some directions, but where total volume expands. However, these results are harder to prove; we will do our best to indicate why in the course of the proof. 

\begin{proof} Suppose that $A \subseteq D_K$ is a set of positive measure satisfying \eqref{eq:Tinvariance}. We want to show that $A$ must have measure $\lambda(D_K)$.

As a subset of $\R^s \times \C^t$, we can treat $A$ as a subset of $\R^n$ (where \( n =s+2t \) is the degree of the number field), and hence we can apply the Lebesgue density theorem to say that $A$ must have a density point $\z$. That is, for any $\delta > 0$ we can find $E > 0$ such that for all balls $B(\z,\varepsilon)$ of radius $\varepsilon < E$, we have that
$$\frac{\lambda(A^C \cap B(\z,\varepsilon))}{\lambda(B(\z,\varepsilon))} < \delta.$$

For each $\delta$, consider $\varepsilon = e^{-b} < E$, where $b \in \N$, and take the set $B(\z,\varepsilon)$. For a map $T_{\alpha,\beta}$, we define a map $\tilde{T}_{\alpha,\beta} : \R^n \to \R^n$ which is just the map $T_{\alpha,\beta}$ without reducing mod $\iota(\OK_K)$. We now claim there exist $i, i_1, \ldots, i_r \in \Z_{\geq 0}$ such that if we define
$$T := \tilde{T}_{u_r,0}^{i_r} \circ \cdots \circ \tilde{T}_{u_1,0}^{i_1} \circ \tilde{T}_{\pi,\kappa}^i,$$
then the set $T(B(\z,\varepsilon))$ is such that
$$T(B(\z,\varepsilon)) + \iota(\gamma) \supseteq D_K$$
for some $\gamma \in \OK_K$ and such that we have
$$\lambda(T(B(\z,\varepsilon))) < C_{K,\pi}$$
for some constant $C_{K,\pi}$ depending on $K$ and $\pi$, but \emph{not} on $\varepsilon$. (This second property is the part which relies on the multiplication by units, and is important.)

Our ball $B(\z,\varepsilon)$ has volume $C_1 \varepsilon^n$, where $C_1$ depends only on $K$. It also contains a ``box'' $\Ball(\z,\varepsilon)$ given by
$$\Ball(\z,\varepsilon) = \prod_{\rho \in \Sigma} B \left( z_{\rho}, \tfrac{\varepsilon}{\sqrt{r+1}} \right)$$

with volume $C_2 \varepsilon^n$, where $C_2$ also depends only on $K$.

If we apply $T$ to the box $\Ball(\z,\varepsilon)$, we find that
$$T(\Ball(\z,\varepsilon)) = \prod_{\rho \in \Sigma} B_{\rho} \left(w_{\rho}, \frac{\varepsilon}{\sqrt{r+1}} \bigg| \rho \Big( u_1^{i_1} \cdots u_r^{i_r} \pi^i \Big) \bigg|^{\theta_{\rho}} \right)$$
for some $\mathbf{w}$, where $\theta_{\rho} = 1$ for real $\rho$, and $\frac{1}{2}$ for complex $\rho$.

Let $\{ L_{\rho} \}_{\rho \in \Sigma}$ be elements of $\R_{>0}$ such that 
$$D_K \subset \prod_{\rho \in \Sigma} B_{\rho}(0, L_{\rho}).$$

Then to guarantee that $T(B(\z,\varepsilon)) \supset D_K$, we can just ensure that
$$\frac{\varepsilon}{\sqrt{r+1}} \Big| \rho \big( u_1^{i_1} \cdots u_r^{i_r} \pi^i \big) \Big|_{\rho}^{\theta_{\rho}} > L_\rho$$
for each $\rho \in \Sigma$. We also want $i$ to be as small as possible, as the factors of $\pi$ in our map $T$ are the only factors which change the volume (by a factor of $\Nm(\pi)$).

Explicitly indexing our $\rho$, taking logarithms and rearranging gives
$$\sum_{k=1}^r i_k \log|\rho_j(u_k)| + i \log|\rho_j(\pi)| > \theta_j^{-1}\left(\log(L_j) + \frac{\log(r+1)}{2} + \log(\varepsilon^{-1})\right).$$

Writing
$$\alpha_{jk} = \log|\rho_j(u_k)|, \quad \Lambda_j = \log|\rho_j(\pi)|, \quad \ell_j = \theta_j^{-1}\left(\log(L_j) + \frac{\log(r+1)}{2}\right),$$
and noting that $\log \varepsilon^{-1} = \log e^b = b$, we can write these as a matrix equation (where the inequalities are just considered row-wise):
$$\left( \begin{array}{cccc} \alpha_{1,1} & \cdots & \alpha_{1,r} & \Lambda_1 \\ \vdots & \ddots & \vdots & \vdots \\ \alpha_{r+1,1} & \cdots & \alpha_{r+1,r} & \Lambda_{r+1} \end{array} \right) \left( \begin{array}{c} i_1 \\ \vdots \\ i_r \\ i \end{array} \right) > \left( \begin{array}{c} \ell_1 + \theta_1^{-1} b \\ \vdots \\ \ell_{r+1} + \theta_{r+1}^{-1} b \end{array} \right).$$

First, we consider $i_k, i \in \R$, change the inequality to an equality and solve. We can do this easily enough: first, we employ row reduction, adding a copy of each of the first \(r\) rows to row \(r+1\). Noting that
\[
\sum_{j=1}^{r+1} \log|\rho_j(x)| = \log \left| \prod_{j=1}^{r+1} \rho_j(x) \right| = \log |N(x)|
\]
and hence that
\[
\sum_{j=1}^{r+1} \alpha_{jk} = \log|N(u_k)| = 0 \quad \hbox{and} \quad \sum_{j=1}^{r+1} \Lambda_j = \log|N(\pi)|,
\]
we get
\[
\left( \begin{array}{cccc} \alpha_{1,1} & \cdots & \alpha_{1,r} & \Lambda_1 \\ \vdots & \ddots & \vdots & \vdots \\ \alpha_{r,1} & \cdots & \alpha_{r,r} & \Lambda_r \\ 0 & \cdots & 0 & \log|N(\pi)| \end{array} \right) \left( \begin{array}{c} i_1 \\ \vdots \\ i_r \\ i \end{array} \right) = \left( \begin{array}{c} \ell_1 + \theta_1^{-1} b \\ \vdots \\ \ell_r + \theta_r^{-1} b \\ \sum_{j=1}^{r+1} (\ell_{j} + \theta_{j}^{-1} b) \end{array} \right).\]

Now this equation clearly has solutions: expanding along the bottom row, the determinant of the matrix is seen to be non-zero, being
\[
\log|N(\pi)| \left| \begin{array}{ccc} \alpha_{1,1} & \cdots & \alpha_{1,r} \\ \vdots & \ddots & \vdots \\ \alpha_{r,1} & \cdots & \alpha_{r,r} \end{array} \right|,
\]
where the second quantity is just the \emph{regulator} of the number field, which is known to be positive. Furthermore, we see that the solution must have
\[
\log|N(\pi)|i = \sum_{j=1}^{r+1} (\ell_j + \theta_j^{-1} b),
\]
and hence
\[
i = \frac{\ell_1 + \cdots + \ell_{r+1}}{\log|\Nm(\pi)|} + \frac{bn}{\log|\Nm(\pi)|}.\]

Taking the floor of each of the components of this solution vector gives us integers. But then we only need a finite number of steps $S$ (independent of $\varepsilon$) in the $\pi$-direction to get inside our required region.

So for some $S \in \N$ not depending on $\varepsilon$, we always have a solution with
$$i \leq \frac{\ell_1 + \cdots + \ell_{r+1}}{\log|\Nm(\pi)|} + \frac{bn}{\log|\Nm(\pi)|} + S.$$

Now we want to see whether applying $T$ keeps the measure of $T(B(\z,\varepsilon))$ below some constant $C_{K,\pi}$. The measure of $T(B(\z,\varepsilon))$ is given by
\begin{align*} \vol(T(B(\z,\varepsilon))) &= |\Nm(\pi)|^i  C_1 \varepsilon^n \\ &\leq |\Nm(\pi)|^{S}  |\Nm(\pi)|^{\frac{\ell_1 + s + \ell_{r+1}}{\log|\Nm(\pi)|}}  |\Nm(\pi)|^{\frac{bn}{\log|\Nm(\pi)|}}  C_1 \varepsilon^n \\ &= C_1 L_1 s L_s L_{s+1}^2 s L_{s+t}^2 (r+1)^{\frac{r+1}{2s}} |\Nm(\pi)|^{S},\end{align*}
which depends only on $K$ and $\pi$ as required.

Now, our map $T$ just expands the measure of a set by a factor of $\Nm(\pi)^i$, and hence we have
$$ \frac{\lambda(T(A^C \cap B(\z,\varepsilon)))} {\lambda(T(B(\z,\varepsilon)))} = \frac{ \Nm(\pi)^i \lambda(A^C \cap B(\z,\varepsilon)) }{ \Nm(\pi)^i \lambda(B(\z,\varepsilon)) } < \delta$$
and therefore
$$ \lambda(T(A^C \cap B(\z,\varepsilon))) < \delta \lambda(T (B(\z,\varepsilon)) ). $$

So the sets 
$$T(A \cap B(\z,\varepsilon)) = T(A) \cap T(B(\z,\varepsilon)) \aand T(B(\z,\varepsilon))$$
differ by a set of measure at most
$$\delta T(B(\z,\varepsilon)) \leq C_{K,\pi} \delta.$$

(Note that since we have a bound on the size of \(T(B(\z,\varepsilon))\) which is independent of \(\varepsilon\), we can bound the discrepancy by a scalar multiple of \(\delta\).)

If we now reduce mod $\iota(\OK_K)$, the measure of the difference between the resulting sets cannot increase, and hence the sets
$$T(A) \cap T(B(\z,\varepsilon)) \mod \iota(\OK_K)$$
and
$$T(B(\z,\varepsilon)) \mod \iota(\OK_K)$$
also differ by a set of measure at most $C_{K,\pi} \delta$. Then noting that $T(B(\z,\varepsilon)) \mod \iota(\OK_K)$ is just $D_K$ (since $T(B(\z,\varepsilon)) \supset D_K$) and that
$$T(A) \cap T(B(\z,\varepsilon)) \subseteq T(A) \subseteq A$$
by our assumption, we have that the difference between $A$ and $D_K$ has measure at most $C_{K,\pi} \delta$. Taking $\delta \to 0$ completes the proof. \end{proof}

Now we have all of the necessary lemmas to prove our theorem.

\begin{proof}[Proof of Theorem \ref{thm:zerooneK}] The proof of this theorem closely follows the proof of Theorem 1 in \cite{Gallagher1961}. As in Gallagher's paper, the main difficulty to be overcome is the restriction of \emph{coprimality} (in this paper, that restriction is baked in via our definition of the denominator function \( \dnm(\gamma) \)), and this is overcome by considering a decomposition
\[
 \App = \A(\pid) \cup \B(\pid) \cup \C(\pid)
\]
for each of an infinite array of prime ideals \(\pid\). We start by showing that the sets \(\A(\pid)\) and \(\B(\pid)\) are always of measure \(0\) or \(\lambda(D_K)\), via applications of Lemma~\ref{lem:metricallytransitive}. Once this has been established, we tackle the case where all of the \(\A(\pid)\) and \(\B(\pid)\) are measure \(0\); then we can use periodicity of the sets \(\C(\pid)\) (which now all have exactly the same measure as \(\App\)) along with a limiting process to show that even in this case, we have that \(\App\) has measure \(0\) or \(\lambda(D_K)\).

Note first that any number field $K$ has infinitely many principal prime ideals. Let $\Omega_K$ be as in the statement of Lemma \ref{lem:metricallytransitive}. Then by Lemma \ref{lem:normbound}, there exists a constant $C$ such that for all principal prime ideals $\pid$ with $\Nm(\pid) > C$, we can find a generator $\pi$ of $\pid$ such that $|\rho(\pi)| > \Omega_K$ for all $\rho \in \Sigma$. From now on in this proof, we only work with such ideals, and the statements about ``all $\pid$'', etc., are taken to refer to all ideals satisfying these conditions. (Note that since the number of ideals of norm \(\leq C\) is finite, we exclude only finitely many of our prime ideals.)

Now, for each ideal $\pid = (\pi)$ and each $\nu \in \N$, we consider the approximation
\begin{equation}\label{eq:papprox} |z_{\rho} - \rho(\gamma)| < |\rho(\pi)|^{\nu - 1} \psi_{\rho}(\dnm(\gamma)) \hbox{ for all } \rho \in \Sigma.  \end{equation}

Define sets $\A(\pid^{\nu})$ by
\[
\A(\pid^{\nu}) = \left\{ \z \in \R^s \times \C^t \ \left| \ \begin{array}{c} \hbox{\(\z\) satisfies \eqref{eq:papprox} for infinitely} \\ \hbox{many \(\gamma\) with \(\pid \nmid \dnm(\gamma)\)} \end{array} \right. \right\},
\]
and define
$$\A^*(\pid) = \bigcup_{\nu \in \N} \A(\pid^{\nu}).$$

By Lemma \ref{lem:setsinboxes}, the set $\A(\pid^{\nu})$ has the same measure as $\A(\pid)$ for any $\nu \in \N$, and then by combining this with the fact that $\A(\pid^{\nu}) \subseteq \A(\pid^{\nu + 1})$ for any $\nu \in \N$, we find that $\A(\pid)$ has the same measure as the union $\A^*(\pid)$. 

We can now see that the map $T_{\pi,0}$ (as defined in the statement of Lemma~\ref{lem:metricallytransitive}) sends $\A(\pid^{\nu})$ into $\A(\pid^{\nu + 1})$, since we have
\[
|\rho(\pi)z_{\rho} - \rho(\pi \gamma)| = |\rho(\pi)||z_{\rho} - \rho(\gamma)| < |\rho(\pi)|^{\nu} \psi_{\rho}(\dnm(\gamma)),
\]
and if \( (\pi) \nmid \dnm(\gamma)\), then \(\dnm(\pi \gamma) = \dnm(\gamma) \). The same holds for the map $T_{u,0}$ for each $u \in \mathcal{F}_K$, since we have
\[
|\rho(u)| < \Omega_K < |\rho(\pi)|
\]
for all \(\rho \in \Sigma\) and \(u \in \mathcal{F}_K\). Hence all of these maps send $\A^*(\pid)$ into itself, and therefore (by Lemma \ref{lem:metricallytransitive}) the set $\A^*(\pid)$ must have measure $0$ or $\lambda(D_K)$. 

Now define $\B(\pid^{\nu})$ by
\[
\B(\pid^{\nu}) = \left\{ \z \in \R^s \times \C^t \ \left| \ \begin{array}{c} \hbox{\(\z\) satisfies \eqref{eq:papprox} for infinitely} \\ \hbox{many \(\gamma\) with \(\pid \mid\mid \dnm(\gamma)\)} \end{array} \right. \right\},
\]
where \( \pid \mid \mid \n \) means that \( \pid \mid \n \) but \( \pid^2 \nmid \n \), and let
$$\B^*(\pid) = \bigcup_{\nu \in \N} \B(\pid^{\nu}).$$

We can now see that the maps $T_{\pi,1}$ sends \(\B(\pid^{\nu})\) into \(\B(\pid^{\nu + 1})\), since 
\[
\left|\rho(\pi)z_{\rho} + \frac{1}{\rho(\pi)} - \rho\left(\pi \gamma + \frac{1}{\pi}\right)\right| = |\rho(\pi)||z_{\rho} - \rho(\gamma)| < |\rho(\pi)|^{\nu} \psi_{\rho}(\dnm(\gamma)),
\]
and if \( (\pi) \mid\mid \dnm(\gamma)\), then \(\dnm(\gamma) = \dnm(\pi \gamma + \frac{1}{\pi})\). The same also holds for $T_{u,0}$ for each $u \in \mathcal{F}_K$, and hence that $\B^*(\pid)$ must also have measure $0$ or $\lambda(D_K)$.

Finally, define sets $\C(\pid)$ by
\[\C(\pid^{\nu}) = \left\{ \z \in \R^s \times \C^t \ \left| \ \begin{array}{c} \hbox{\(\z\) satisfies \eqref{eq:papprox} for infinitely} \\ \hbox{many \(\gamma\) with \(\pid^2 \mid \dnm(\gamma)\)} \end{array} \right. \right\}.\]

Then we note that for any $\pid$, we have
$$\App = \A(\pid) \cup \B(\pid) \cup \C(\pid).$$

If any set $\A(\pid)$ or $\B(\pid)$ has non-zero measure, then it has measure $\lambda(D_K)$, and hence so does $\App$. So now assume that $\lambda(\A(\pid)) = \lambda(\B(\pid)) = 0$ for all $\pid$. Then we have
$$\lambda(\App) = \lambda(\C(\pid))$$
for all $\pid$. Next, note that if \(\z\) and \(\gamma\) satisfy
\[
|z_{\rho} - \rho(\gamma)| < \psi_{\rho}(\dnm(\gamma)) \hbox{ for all } \rho \in \Sigma
\]
with \(\pid^2 \mid \dnm(\gamma)\), then we have that
\[
\left|\left(z_{\rho} + \rho \left( \frac{\kappa}{\pi} \right) \right) - \rho \left( \gamma + \frac{\kappa}{\pi} \right) \right| = |z_{\rho} - \rho(\gamma)| < \psi_{\rho}(\dnm(\gamma)) \hbox{ for all } \rho \in \Sigma,
\]
and that \( \dnm(\gamma + \frac{\kappa}{\pi}) = \dnm(\gamma)\). So if \(\z \in \C(\pid)\), then we have $\z + \iota(\frac{\kappa}{\pi}) \in \C(\pid)$ for any $\kappa \in \OK_K$. 

%

Now, suppose that \(\App\) has positive measure (and hence \(\C(\pid)\) has positive measure for all \(\pid\)). Then we can consider a density point \(\z\) of \(\App\).

For each \(\pid\), we know that \(\iota(\pid^{-1})\) forms a lattice in \(\R^n\). Let \(D_{\pid}\) be a fundamental domain for this lattice which is contained entirely within \(D_K\) and whose interior contains \(\z\).

Since \(\C(\pid)\) is \(\frac{\OK_K}{\pi}\)-periodic, we have that 
\[
\lambda(C(\pid) \cap D_{\pid}) = \lambda(\C(\pid) \cap (D_{\pid} + \tfrac{\kappa}{\pi}))
\]
for any \(\kappa \in \OK_K\) (everything obviously being taken mod \(\iota(\OK_K)\)). So we have
\[
N(\pid) \lambda(\C(\pid) \cap D_{\pid}) = \bigcup_{\kappa \in \OK_K/\pid \OK_K} (\C(\pid) \cap (D_{\pid} + \tfrac{\kappa}{\pid})) = \lambda(\C(\pid) \cap D_K) = \lambda(\C(\pid)),
\]
and hence, since 
\[
N(\pid) = \frac{\lambda(D_K)}{\lambda(D_{\pid})},
\]
we have
\[
\lambda(D_K)\frac{\lambda(\C(\pid) \cap D_{\pid})}{\lambda(D_{\pid})} = \lambda(\C(\pid)).\]

But then by the Lebesgue density theorem, the left-hand side tends to \(\lambda(D_K)\) as \(N(\pid) \to \infty\). So we have \( \lambda(\C(\pid)) \to \lambda(D_K)\), and hence (since \(\lambda(\App) = \lambda(\C(\pid))\) for all \(\pid\)) we have \(\lambda(\App) = \lambda(D_K)\) as required.

 \end{proof}

\section{Overlap estimates}
\label{sec:overlap}
For an integral ideal $\n \in I_K$, define a set $\mathcal{A}_{\n}(\PSI)$ by
$$\mathcal{A}_{\n}(\PSI) = \left\{ \x \in D_K \ \left| \ \begin{array}{c} \hbox{there is some $\gamma \in K$ with $\dnm \gamma = \n$} \\ \hbox{such that $\gamma$ is a $\PSI$-good approximation to $\x$} \end{array} \right. \right\}.$$

Note that we can write
\[
\App = \limsup_{\n \in I_K} \mathcal{A}_{\n}(\PSI),
\]
where the \( \n \) are ordered by increasing norm (and those of equal norm are ordered arbitrarily).

In this section, we want to prove the following lemma about these sets:

\begin{lemma}\label{lem:overlapK} There exists some constant $C_K > 0$ such that for any two integral ideals $\m \neq \n$, we have
$$\lambda(\mathcal{A}_{\m}(\PSI) \cap \mathcal{A}_{\n}(\PSI)) \leq C_K \Nm(\m) \Nm(\n) \Psi(\m) \Psi(\n).$$\end{lemma}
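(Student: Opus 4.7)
The plan is to bound the overlap by decomposing it into contributions from pairs of approximants and applying a lattice-point count. I would begin by writing
$$\mathcal{A}'_\n = \bigcup_{\substack{\gamma \in K \\ \dnm(\gamma) = \n}} B_\n(\gamma), \qquad B_\n(\gamma) := \{\x \in D_K : |x_\rho - \rho(\gamma)| \leq \psi_\rho(\n) \text{ for all } \rho \in \Sigma\},$$
and then using the union bound
$$\lambda(\mathcal{A}'_\m \cap \mathcal{A}'_\n) \leq \sum_{\gamma_1, \gamma_2} \lambda(B_\m(\gamma_1) \cap B_\n(\gamma_2)),$$
summed over $\gamma_1, \gamma_2 \in K$ with $\dnm(\gamma_1) = \m$ and $\dnm(\gamma_2) = \n$.

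For each fixed $\gamma_2$, I would count the contributing $\gamma_1$'s. For the two balls to meet, $\delta := \gamma_1 - \gamma_2$ must satisfy $|\rho(\delta)| \leq c\,(\psi_\rho(\m) + \psi_\rho(\n))$ for every embedding $\rho$, by the triangle inequality and the bound $(\sqrt{a}+\sqrt{b})^2 \leq 2(a+b)$ that absorbs the squaring convention at the complex places. The key algebraic observation is that $\gamma_1 \in \m^{-1}$, so admissible $\gamma_1$'s lie in a coset of the lattice $\iota(\m^{-1}) \subset \R^s \times \C^t$, which has covolume $\lambda(D_K)/\Nm(\m)$. A standard lattice-point estimate then bounds the number of contributing $\gamma_1$'s by $C_K \Nm(\m) \cdot V_{\mathrm{box}}$, where $V_{\mathrm{box}}$ is the Lebesgue measure of the admissible box for $\delta$. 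On the other hand, $B_\m(\gamma_1) \cap B_\n(\gamma_2)$ is contained in a ball of per-direction radii $\min(\psi_\rho(\m), \psi_\rho(\n))$, so its measure is at most $C_K V_{\mathrm{int}}$.

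The crucial step is then to estimate $V_{\mathrm{box}} \cdot V_{\mathrm{int}}$ by $C_K \Psi(\m)\Psi(\n)$. I would factor this product per embedding and apply the elementary inequality
$$(\psi_\rho(\m) + \psi_\rho(\n)) \cdot \min(\psi_\rho(\m), \psi_\rho(\n)) \leq 2 \psi_\rho(\m)\psi_\rho(\n)$$
at each real place, with the corresponding estimate at each complex place arranged to reproduce the $\psi_\tau^2$ factors appearing in the definition of $\Psi$. Summing the resulting per-$\gamma_2$ bound $C_K \Nm(\m)\Psi(\m)\Psi(\n)$ over $\gamma_2 \in D_K$, using that such $\gamma_2$ lie in $\iota(\n^{-1})$ with at most $\Nm(\n)$ points per fundamental domain, completes the proof.

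The hard part will be the bookkeeping in the complex directions: the quadratic appearance of $\psi_\tau$ in $\Psi$ forces one to combine the triangle-inequality box widths $\sqrt{\psi_\tau(\m)} + \sqrt{\psi_\tau(\n)}$ against the intersection radii $\sqrt{\min(\psi_\tau(\m),\psi_\tau(\n))}$ in a way that produces $\psi_\tau(\m)^2\psi_\tau(\n)^2$ factors, reconciling the squaring convention $|\cdot|_\C = |\cdot|_{\mathrm{std}}^2$ with the form of $\Psi$. The assumption $\m \neq \n$ is used only to ensure $\delta \neq 0$, so that no spurious contribution from $\delta = 0$ enters the lattice sum.
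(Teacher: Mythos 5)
Your overall strategy is the same as the paper's: write each $\mathcal{A}'_{\n}(\PSI)$ as a union of boxes, bound the number of overlapping pairs of boxes by a lattice-point count, bound each individual overlap by the product of the minima, and combine the two via $(\max)\cdot(\min)=\psi_{\rho}(\m)\psi_{\rho}(\n)$ at each place. The complex-place bookkeeping that you flag as the hard part does work out and is not where the difficulty lies.

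The genuine gap is in the counting step. Your per-$\gamma_2$ assertion --- that the number of contributing $\gamma_1$ is at most $C_K\Nm(\m)V_{\mathrm{box}}$ --- is false in exactly the regime that matters. The admissible $\gamma_1$ lie in the lattice $\iota(\m^{-1})$ of covolume $\lambda(D_K)/\Nm(\m)$, but the box is centred at $\iota(\gamma_2)$, which is not a point of that lattice, so a box of volume far smaller than the covolume can still contain one lattice point; any honest lattice-point estimate gives only $1+C_K\Nm(\m)V_{\mathrm{box}}$, and since $\PSI$ is arbitrary the term $\Nm(\m)V_{\mathrm{box}}$ can be $\ll 1$. (Already for $K=\Q$, $\m=(m)$, $\n=(n)$ coprime and $\psi(m)=\psi(n)=\tfrac{1}{2mn}$, the fractions $c/n$ with $cm\equiv\pm1\bmod n$ have exactly one reduced $b/m$ within distance $\tfrac{1}{mn}$, while your claimed per-$\gamma_2$ bound is of order $1/n$.) Summing the corrected bound $1+C_K\Nm(\m)V_{\mathrm{box}}$ over the $\leq\Nm(\n)$ choices of $\gamma_2$ leaves an extra additive term of order $\Nm(\n)V_{\mathrm{int}}$, which does not imply the target $C_K\Nm(\m)\Nm(\n)\Psi(\m)\Psi(\n)$. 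The paper's fix is to count not the $\gamma_1$ near a fixed $\gamma_2$ but the differences $\theta=\gamma_1-\gamma_2$ directly: these are \emph{nonzero} elements of the fractional ideal $\frac{\g}{\m\n}\OK_K$ with $\g=\gcd(\m,\n)$, and a nonzero element $\theta$ of a fractional ideal $\mathfrak{b}$ satisfies $|\Nm(\theta)|\geq\Nm(\mathfrak{b})$, so the box can only contain such an element when its volume already dominates the covolume; this absorbs the additive error and (via Lang's parallelotope theorem, which also handles long thin boxes) gives at most $C(\prod_{\rho}\Delta_{\rho})\Nm(\m)\Nm(\n)/\Nm(\g)$ admissible differences. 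One then needs the further step, absent from your sketch because your organisation does not require it, of showing that each difference $\theta$ arises from at most $\Nm(\g)$ pairs $(\gamma_1,\gamma_2)$, which restores the factor $\Nm(\g)$ and yields the stated bound.
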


\begin{proof} Define boxes $\Ball(\gamma,\PSI(\n))$ by
$$\Ball(\gamma,\PSI(\n)) := \prod_{\rho \in \Sigma} B_{\rho}(\rho(\gamma), \psi_{\rho}(\n)).$$

Then we have
\begin{equation}\label{eq:Anboxes}\mathcal{A}_{\n}(\PSI) = \left( \bigcup_{\substack{\gamma \in K \\ \dnm(\gamma) = \n}} \Ball(\gamma, \PSI(\n)) \right) \cap D_K.\end{equation}

So we can bound the measure of the overlap between the two sets by counting the number of pairs of boxes which overlap, and then bounding the measure of the overlap between any two boxes.

For $\Ball(\beta, \PSI(\m))$ and $\Ball(\gamma, \PSI(\n))$ to overlap, we need
$$B_{\rho}(\rho(\beta), \psi_{\rho}(\m)) \aand B_{\rho}(\rho(\gamma), \psi_{\rho}(\n))$$
to overlap for each $\rho \in \Sigma$. This certainly happens if we have
$$|\rho(\beta) - \rho(\gamma)|_{\rho} \leq 2 \max \{ \psi_{\rho}(\m) , \psi_{\rho}(\n)\}$$
for each $\rho \in \Sigma$. If we write
$$\Delta_{\rho} := 2 \max \{ \psi_{\rho}(\m) , \psi_{\rho}(\n) \},$$
then we want 
$$|\rho(\beta - \gamma)| \leq \Delta_{\rho}$$
for each $\rho \in \Sigma$.

Set $\beta - \gamma = \theta$. If we write $\g = \gcd(\m,\n)$, then we have $\theta \in \frac{\g}{\m \n} \OK_K$, and we also have $\theta \neq 0$, since $\dnm \beta \neq \dnm \gamma$. So we want non-zero $\theta \in \frac{\g}{\m \n} \OK_K$ satisfying
$$|\rho(\theta)| \leq \Delta_{\rho}$$
for all $\rho \in \Sigma$. 

Following the notation of \cite{LangBook} (specifically that of Chapter V, Theorem 0), we define a \(M_K\)-divisor \(\mathfrak{c}(v)\) by
\[
\mathfrak{c}(v) = \left\{ \begin{array}{rl}\Delta_{\rho} & \quad v = \rho \in \Sigma, \\ |\frac{\g}{\m\n}|_{\pid} & \quad v = |\cdot|_{\pid}. \end{array}  \right. \]

Then the number of potential $\theta$ we can have is (in the notation of \cite{LangBook}) given by \(\lambda(\mathfrak{c}) - 1\), and so immediately applying Theorem 0 of Chapter V in \cite{LangBook}, we find that the number of potential \(\theta\) is bounded above by
$$C \left( \prod_{\rho \in \Sigma} \Delta_{\rho} \right) \frac{\Nm(\m) \Nm(\n)}{\Nm(\g)},$$
where $C$ is some constant depending only on $K$. 

So now we want a bound for each $\theta$ on the number of pairs $(\beta,\gamma)$ such that
$$\beta, \gamma \in D_K, \quad \dnm \beta = \m, \quad \dnm \gamma = \n, \quad \beta - \gamma = \theta.$$

We can write $\m = \g \sid$ and $\n = \g \tid$, where $\gcd(\sid,\tid) = 1$. Now, write $\Nm(\m) = m$, $\Nm(\n) = n$, etc., and for each ideal $\aid$, define $\tilde{\aid}$ to be such that $\aid \tilde{\aid} = (a)$. Then
$$\beta = \frac{b}{gs} \aand \gamma = \frac{c}{gt},$$
where $b \in \tilde{\g} \tilde{\sid} \OK_K$ and $c \in \tilde{\g} \tilde{\tid} \OK_K$, and hence
$$tb - sc = gst \theta =: \Theta \in \tilde{\g} \tilde{\sid} \tilde{\tid} \OK_K.$$

Suppose two pairs $(b,c)$ and $(b',c')$ satisfy this. Then we have
$$tb - sc = \Theta = tb' - sc',$$
and hence
$$t(b-b') = s(c-c').$$

By comparing the left- and right-hand sides of this equation, we find that both sides lie in the space $\tilde{\g} (s) (t) \OK_K$, and hence any two solutions to $\beta - \gamma = \theta$ must have $\beta - \beta' \in \frac{1}{\g} \OK_K,$ giving at most $\Nm(\g)$ solutions. So there can be at most 
$$C \left( \prod_{\rho \in \Sigma} \Delta_{\rho} \right) \Nm(\m) \Nm(\n)$$
overlaps, with the size of each overlap being at most
$$\prod_{\rho \ \hbox{\scriptsize real}} 2 \min \{ \psi_{\rho}(\m) , \psi_{\rho}(\n) \} \prod_{\rho \ \hbox{\scriptsize complex}} \pi \min \{ \psi_{\rho}(\m) , \psi_{\rho}(\n) \}^2.$$

Then the total size of the overlap is bounded above by the maximum number of overlaps multiplied by the maximum size of any given overlap. This simplifies to
$$C 2^s \pi^t \Psi(\m) \Psi(\n) \Nm(\m) \Nm(\n),$$
and hence we have our result.\end{proof}
\section{Proving Theorem \ref{thm:duffinschaefferK}}
\label{sec:mainproof}
Now that we have our zero-one law and our overlap estimates, we can proceed to prove our main theorem.

\begin{proof}[Proof of Theorem \ref{thm:duffinschaefferK}] As before, note that we have
$$\App = \limsup_{\n \in I_K} \mathcal{A}_{\n}(\PSI).$$

First, we want to determine the measure of $\mathcal{A}_{\n}(\PSI)$. Looking at \eqref{eq:Anboxes}, we see this is a union of boxes of the same measure, and that the measure of a single box is given by
$$\left( \prod_{\hbox{\scriptsize $\rho$ real}} 2 \psi_{\rho}(\n) \right) \left( \prod_{\hbox{\scriptsize $\rho$ complex}} \pi \psi_{\rho}(\n)^2 \right) = 2^s \pi^t \Psi(\n).$$

By the definition of \(\Phi(\n)\), there are \(\Phi(\n)\) points of denominator \(\n\) inside \(D_K\); then around each we have a box of volume \(2^s \pi^t \Psi(\n)\). For each of these points, if any of the surrounding box \(B\) spills out of \(D_K\), then by simply translating by elements of the lattice \(\iota(\OK_K)\), we find a corresponding finite union of boxes 
\[
\tilde{\mathcal{B}} = \bigcup_{i=1}^h \tilde{B}_i
\]
all with centres \emph{outside} \(D_K\) such that we have
\[
\lambda(B \cap D_K^c) = \lambda(\tilde{\mathcal{B}} \cap D_K)
\]
(where \(D_K^c\) denotes the complement of the set \(D_K\)).

Conversely, for any box with centre outside \(D_K\) which intersects \(D_K\), we can always find a box with centre inside \(D_K\) which spills out of \(D_K\), and finitely many other boxes with centres outside \(D_K\) intersecting \(D_K\), such the measures again correspond. (That is, the union of the finitely many intersections with \(D_K\) from the boxes outside will have measure equal to that of the intersection of the box inside \(D_K\) with \(D_K^c\).)

Finally, by our boundedness condition \eqref{eq:boundedness}, all boxes are disjoint. To see this, suppose we have an overlap between two boxes in \(\mathcal{A}_{\n}(\PSI)\). Then for some pair \(\gamma \neq \gamma'\), both with denominator \(\n\), we must have
\[
2 \psi_{\rho}(\n) > \rho(\gamma) - \rho(\gamma')
\]
for all \(\rho \in \Sigma\). Then we would have
\[
\prod_{\rho \in \Sigma} 2 \psi_{\rho}(\n) > \prod_{\rho \in \Sigma} \rho(\gamma - \gamma'),
\]
and hence for some non-zero \(\theta \in \frac{1}{\n} \OK_K\) we would have
\[
\Nm(\theta) < \prod_{\rho \in \Sigma} 2 \psi_{\rho}(\n) \leq \frac{1}{\Nm(\n)}
\]
(where the last bound comes from \eqref{eq:boundedness}).

But this is clearly a contradiction, since non-zero elements of \(\frac{1}{\n} \OK_K\) have norm at least \( \frac{1}{\Nm(\n)} \). So all our boxes are disjoint, and hence we have that
$$\lambda(\mathcal{A}_{\n}(\PSI)) = 2^s \pi^t \Phi(\n) \Psi(\n).$$

We can now apply a standard measure-theoretic lemma (see for example Lemma 2.3 in \cite{HarmanBook}) to see that
$$\lambda(\mathcal{A}(\PSI)) \geq \limsup_{R \in \N} \left( \sum_{\substack{\n \in I_K \\ \Nm(\n) \leq R}} \lambda(\mathcal{A}_{\n}(\PSI)) \right)^2 \left( \sum_{\substack{\m,\n \in I_K \\ \Nm(\m),\Nm(\n) \leq R}} \lambda(\mathcal{A}_{\m}(\PSI) \cap \mathcal{A}_{\n}(\PSI)) \right)^{-1}.$$

Using the overlap estimates from Lemma \ref{lem:overlapK}, we have that
\begin{align*} \frac{\left( \sum_{N(\n) \leq R} \lambda(\mathcal{A}_{\n}(\underline{\psi})) \right)^2}{\sum_{N(\m),N(\n) \leq R} \lambda(\mathcal{A}_{\m}(\underline{\psi}) \cap \mathcal{A}_{\n}(\underline{\psi}))} &\geq C_K \frac{\left( \sum_{N(\n) \leq R} \Phi(\n) \Psi(\n) \right)^2}{\sum_{N(\m),N(\n) \leq R} N(\m) N(\n) \Psi(\m) \Psi(\n)} \\ 
&= C_K \frac{\left( \sum_{N(\n) \leq R} \Phi(\n) \Psi(\n) \right)^2}{\left( \sum_{N(\n) \leq R} N(\n) \Psi(\n) \right)^2} \\
 &= C_K \left( \frac{ \sum_{N(\n) \leq R} \Phi(\n) \Psi(\n)}{ \sum_{N(\n) \leq R} N(\n) \Psi(\n)} \right)^2.  \end{align*}

Then since we assumed \eqref{eq:limsup}, we have that $\lambda(\mathcal{A}(\underline{\psi})) > 0$, and hence (by Theorem \ref{thm:zerooneK}) we have that $\lambda(\mathcal{A}(\underline{\psi})) = \lambda(D_K)$ as required. \end{proof}

\end{document}